\theoremstyle{plain}
\newtheorem{theorem}{Theorem}
\newtheorem{lemma}{Lemma}
\theoremstyle{remark}
\begin{document}
	
	\begin{abstract}
		It is known that an ideal triangulation of a compact $3$-manifold with nonempty boundary is minimal if and only if it contains the minimum number of edges among all ideal triangulations of the manifold. Therefore, any ideal one-edge triangulation (i.e., an ideal singular triangulation with exactly one edge) is minimal. Vesnin, Turaev, and the first author showed that an ideal two-edge triangulation is minimal if no $3$-$2$ Pachner move can be applied. In this paper we show that any of the so-called poor ideal three-edge triangulations is minimal. We exploit this property to construct minimal ideal triangulations for an infinite family of hyperbolic $3$-manifolds with totally geodesic boundary.
	\end{abstract}
	
	\title[Poor ideal three-edge triangulations]{Poor ideal three-edge triangulations are minimal}
	
	\author{Evgeny Fominykh, Ekaterina Shumakova}
	
	\address{Saint Petersburg University; St. Petersburg Department of Steklov Mathematical Institute of Russian Academy of Sciences, Saint Petersburg, Russia \\
	efominykh@gmail.com}
	
	\address{Saint Petersburg University, Saint Petersburg;  Chelyabinsk State University, Chelyabinsk, Russia\\
	shumakova\_kate@mail.ru}
	
	\thanks{This work is supported by the Russian Science Foundation under grant 19-11-00151.} 
	

\maketitle

\section{Introduction}

	In this paper we restrict ourselves to considering connected compact $3$-manifolds $M$ with nonempty boundary and their ideal triangulations. An ideal triangulation of $M$ is called \emph{minimal} if it contains the minimum number of tetrahedra among all ideal triangulations of $M$. Only a few infinite series of minimal ideal triangulations are known. A geometric approach for proving the minimality of ideal triangulations of hyperbolic $3$-manifolds with cusps is build on a volume-based lower bound for the number of tetrahedra \cite{Anis05, FomGarGoTarVes16, IshNem16}. A topological approach to this problem is based on the following simple idea. An ideal triangulation of a compact $3$-manifold with nonempty boundary is minimal if and only if it contains the minimum number of edges among all ideal triangulations of the manifold. Therefore, ideal triangulations with a single edge are automatically minimal. It is shown in \cite{FrigMarPet03-1} that any compact $3$-manifold having an ideal triangulation with a single edge is hyperbolic with totally geodesic boundary, and infinite series of such triangulations are constructed. Vesnin, Turaev, and the first author showed \cite{VesTurFom16} that an ideal two-edge triangulation is minimal if no $3$-$2$ Pachner move can be applied. Several infinite series of such triangulations of hyperbolic $3$-manifolds with totally geodesic boundary were constructed in \cite{PaolZim96, VesFom11, VesTurFom15}. The minimality of ideal three-edge triangulations that are analogous to triangulations from \cite{PaolZim96} and determine hyperbolic $3$-manifolds with totally geodesic boundary is proved in \cite{VesFom12}.
	
	In this paper we prove (Theorem \ref{thm:main}) that any so-called poor ideal three-edge triangulation is minimal. We construct a three-parameter family of such triangulations (Theorem \ref{thm:example}) and prove that if all parameters have the same values, the corresponding $3$-manifolds are hyperbolic with totally geodesic boundary (Theorem \ref{thm:hyperbolic}). The so-called $\varepsilon$-invariant of $3$-manifolds \cite{MatOvchSok}, which is a homologically trivial part of the Turaev -- Viro invariant \cite{Turaev-Viro} of order $5$ play a key role in our proof of Theorem \ref{thm:main}.
	
\section{Poor triangulations}

	In this section we describe the duality between ideal triangulations and special spines of $3$-manifolds with nonempty boundary and also introduce a concept of a poor triangulation.
	
\subsection{Ideal triangulations}
	
	Let $\mathcal{T}$ be a cell complex made from finitely many $3$-simplices by gluing all of their $2$-dimensional faces in pairs via simplicial maps, and $|\mathcal{T}|$ its underlying space. The simplices prior to identification, and their vertices, edges, and faces, are called \emph{model cells}. Let $\mathcal{T}^{(0)}$ denote the $0$-skeleton of $\mathcal{T}$.
   
	Note that $\mathcal{T}$ may actually not be a $3$-manifold, because the link of a vertex could be any closed surface, and the link of the midpoint of an edge could be a projective plane. Let $M$ be a compact $3$-manifold with nonempty boundary. If $|\mathcal{T}|\setminus |\mathcal{T}^{(0)}|$ is homeomorphic to the interior of $M$, then we say that $T$ is an \emph{ideal (singular) triangulation} of $M$.

\subsection{Special spines}
	
	Recall the definitions of simple and special polyhedra (we follow the book \cite{MatBook}). A compact polyhedron $P$ is said to be \emph{simple} if the link of each of its points $x \in P$ is homeomorphic to one of the following one-dimensional polyhedra: (a) a circle (in this case the point $x$ is said to be \emph{nonsingular}); (b) a union of a circle and a diameter (in this case the point $x$ is said to be \emph{triple}); (c) a union of a circle and three radii (in this case the point $x$ is called a \emph{true vertex}). Typical neighborhoods of points of a simple polyhedron are shown in Fig. \ref{fig:neighborhood}. We will call the points of types (b) and (c) \emph{singular} points. The set of singular points of a simple polyhedron $P$ is called its \emph{singular graph} and is denoted by $SP$. In the general case, the set $SP$ is not a graph on the set of true vertices of $P$, because $SP$ may contain closed triple lines without true vertices. If there are no closed triple lines, then $SP$ is a 4-regular graph (each vertex is incident to exactly four edges) that may have loops and multiple edges.
	
	\begin{figure}[ht]
		\begin{center}
			\includegraphics[scale=0.8]{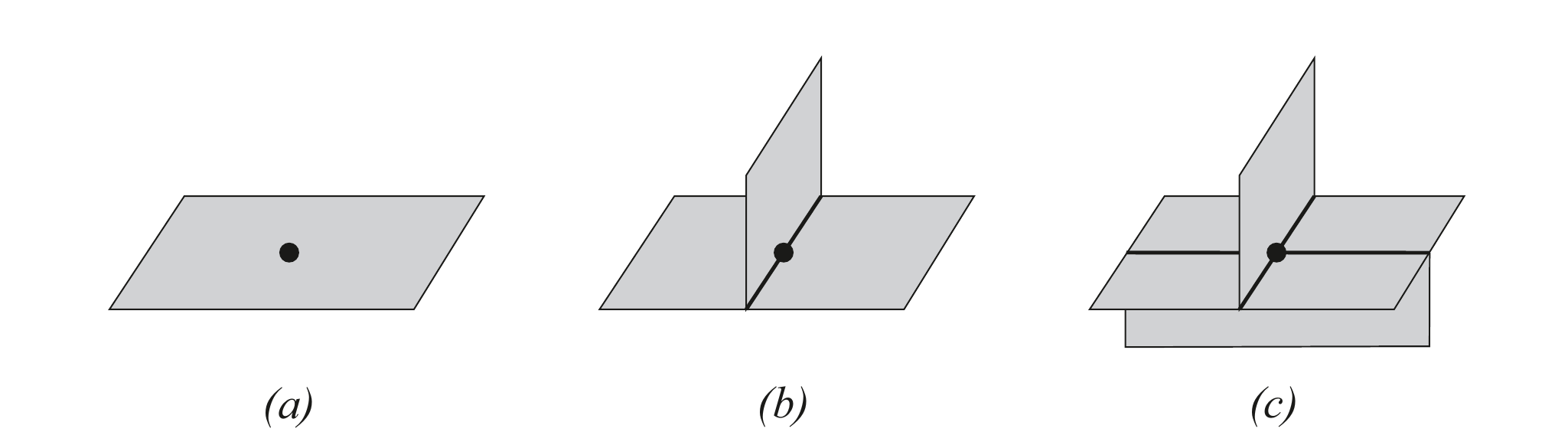}
		\end{center}
		\caption{Allowable neighborhoods in a simple polyhedron.} 
		\label{fig:neighborhood}
	\end{figure}
	
	Every simple polyhedron has a natural \emph{stratification}. The strata of dimension $2$ (\emph{$2$-components}) are the connected components of the set of nonsingular points. The strata of dimension $1$ are the open or closed triple lines. The strata of dimension $0$ are the true vertices. It is natural to require that each stratum be a cell. A simple polyhedron $P$ is said to be \emph{special} if the following conditions are satisfied:\\
   (1) each one-dimensional stratum of $P$ is an open $1$-cell. \\
   (2) each $2$-component of the polyhedron $P$ is an open $2$-cell.\\ It is obvious that if $P$ is connected and contains at least one true vertex, then condition (2) implies condition (1).
	
	Let $M$ be a connected compact $3$-manifold with nonempty boundary. A compact polyhedron $P\subset M$ is called a \emph{spine} of $M$ if $M\setminus P$ is homeomorphic to $\partial M \times (0,1]$. A spine of $M$ is said to be \emph{special} if it is a special polyhedron. It is easy to see that a special spine of a connected $3$-manifold is connected.

\subsection{Duality between ideal triangulations of $3$-manifolds with nonempty boundary and their special spines}

	Now we describe a relation between ideal triangulations and special spines of compact $3$-manifolds with nonempty boundary.
	
	Each ideal triangulation $\mathcal{T}$ of a $3$-manifold $M$ naturally determines a dual special spine $P$. Namely, let $\mathcal{T}$ be obtained by gluing model tetrahedra $T_{1}, \ldots, T_{n}$. In each tetrahedron $T_{i}$ consider the union $R_{i}$ of the links of all four vertices of $T_{i}$ in its first barycentric subdivision (see Fig.~\ref{fig:dualspine}). Gluing the tetrahedra $T_{1}, \ldots, T_{n}$ induces gluing the corresponding polyhedra $R_{1}, \ldots, R_{n}$ together. We get a special polyhedron $P = \cup_{i=1}^{n} R_{i}$ which is a spine of $M$. It is well known \cite{MatBook} that the correspondence $\mathcal{T}\to P$ induces a bijection between (the equivalence classes of) ideal triangulations of $M$ and (the homeomorphism classes of) special spines of $M$. Moreover, the correspondence $\mathcal{T}\to P$ induces one-to-one correspondences between the edges, faces, and tetrahedra of $\mathcal{T}$ and the $2$-components, edges, and true vertices of $P$ which preserves the incidence relations.
	
	\begin{figure}[ht]
		\begin{center}
			\includegraphics[scale=0.5]{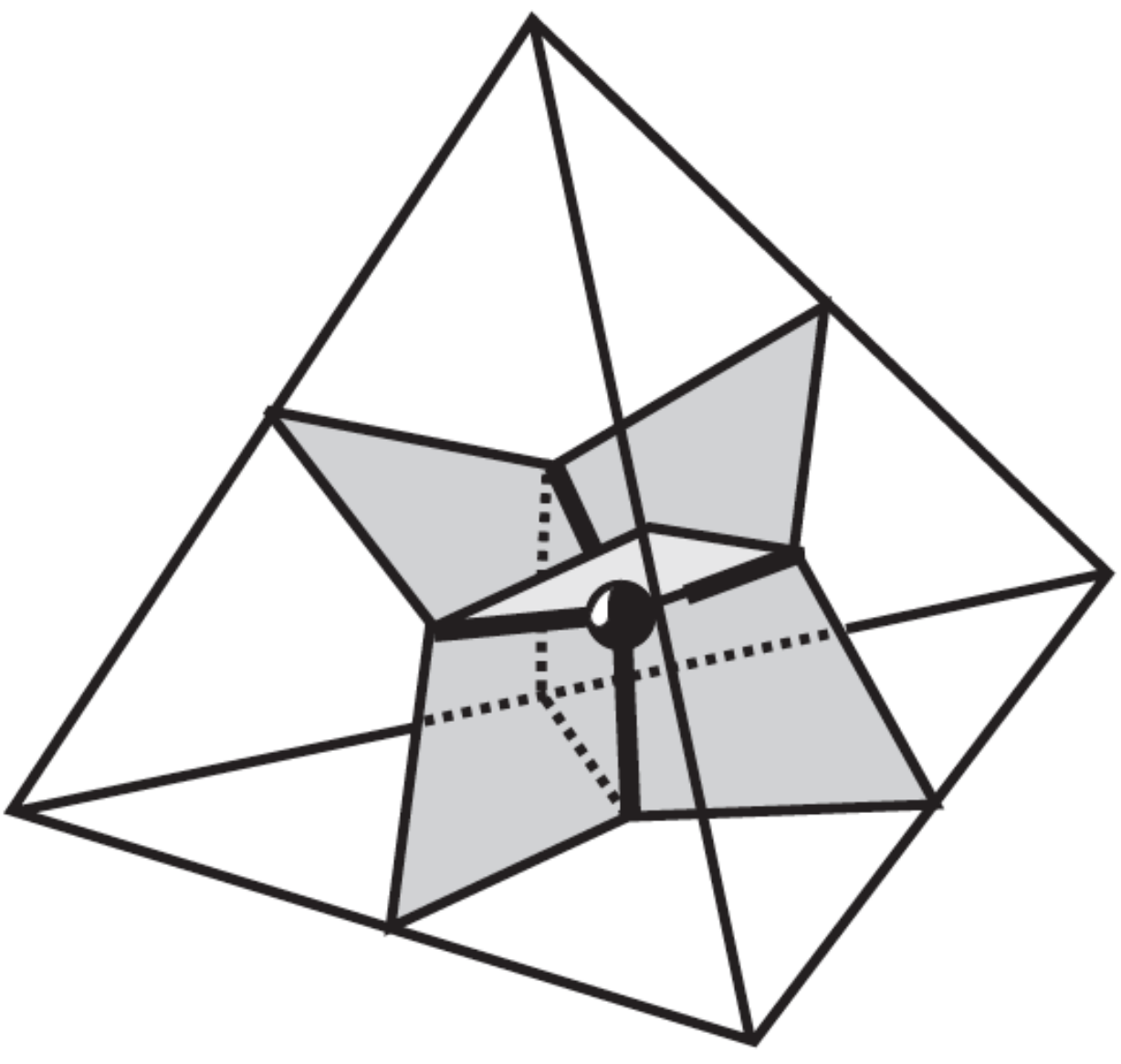}
		\end{center}
		\caption{Duality between ideal triangulations and special spines.}
		\label{fig:dualspine}
	\end{figure}

	Let us describe the inverse procedure that constructs an ideal triangulation $\mathcal{T}$ of $M$ from a special spine $P$ of $M$. Choose a point $a_\xi$ inside every $2$-component $\xi$ of $P$ and a point $b_\gamma$ inside every edge $\gamma$ of $P$. Recall that there is a characteristic map $f: D^{2} \to P$, which carries the interior of $D^{2}$ onto $\xi$ homeomorphically and whose restriction to $S^{1} = \partial D^{2}$ is a local embedding. We will call the curve $f|_{\partial D^{2}} : {\partial D^{2}} \to P$ (and its image $f|_{\partial D^{2}} (\partial D^{2})$) the \emph{boundary curve} of $\xi$ and denote it by $\partial \xi$. Connect each point $a_\xi$ by arcs in $\xi$ with all those points $b_\gamma$ that lie on the boundary curve of $\xi$. Note that each $b_\gamma$ is incident to exactly three arcs. We get a decomposition of $P$ into the union $\cup_j E_j $ of several copies of the polyhedron $E$ which is the regular neighborhood of a true vertex (see Fig. \ref{fig:neighborhood}(c)). In other words, $P$ is obtained by gluing together all the polyhedra $E_j$. Now each $E_j$ we embed in the model tetrahedron $T_{j}$ as shown in Fig \ref{fig:dualspine}.
	Gluing the $E_j$'s determines the pairwise identifications of the model faces of $T_{j}$'s.

\subsection{Poor spines and triangulations}

	Let $\mathcal{F}(P)$ be the set of all simple subpolyhedra of a special polyhedron $P$ including $P$ and the empty set. We call the polyhedron $P$ \emph{poor} if it does not contain proper simple subpolyhedra, i.e. $\mathcal{F}(P) = \{\emptyset, P\}$. Let $\mathcal{T}$ be an ideal triangulation of a connected compact $3$-manifold with nonempty boundary. The triangulation $\mathcal{T}$ will be called \emph{poor} if its dual special polyhedron is poor.
	
	For each edge $e$ of $\mathcal{T}$ denote by $\mathcal{E}(e)$ the set of all model edges that are identified to form $e$. Then the notion of poor triangulation can be expressed in terms of the triangulation itself as follows. 

\begin{lemma}
	\label{lm:_poor_spine}
	An ideal triangulation is poor if and only if for any proper subset $S$ of the set of all its edges there is a model face that is incident to exactly one model edge in $ \bigcup_{e\in S}\mathcal{E}(e)$.
\end{lemma}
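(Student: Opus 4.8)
The plan is to exploit the duality between ideal triangulations and special spines, together with the one-to-one correspondence that sends edges, faces, and tetrahedra of $\mathcal{T}$ to $2$-components, edges, and true vertices of the dual spine $P$, preserving incidences. The key observation is that simple subpolyhedra of $P$ are built out of these strata, so I would first translate the notion of ``proper simple subpolyhedron'' into purely combinatorial data about which $2$-components, edges, and vertices are retained. The definition of poor says $\mathcal{F}(P) = \{\emptyset, P\}$, so the statement to prove is an equivalence: $\mathcal{T}$ is poor (no proper simple subpolyhedron) if and only if the stated face condition holds for every proper subset $S$ of edges.

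First I would analyze what a simple subpolyhedron $Q \subsetneq P$ looks like. Since $Q$ is simple and sits inside $P$, each point of $Q$ must have one of the three allowable link types (circle, triple line, true vertex). The crucial local condition is at triple lines: if a triple line of $P$ (dual to a face of $\mathcal{T}$) survives in $Q$, then the link forces either all three adjoining $2$-components to be present, or exactly one of them to be present together with the other two glued into a single nonsingular sheet passing through — but in a subpolyhedron we cannot create new material, so along a surviving edge $\gamma$ of $P$ the number of incident $2$-components of $Q$ that are present must be such that the link of each point of $Q$ is admissible. Working this out, a subset $\Sigma$ of the $2$-components of $P$ determines a simple subpolyhedron (namely the closure of the union of the chosen open $2$-cells) precisely when along every edge $\gamma$ of $P$ the number of incident $2$-components lying in $\Sigma$ is never exactly one; that is, each edge of $P$ is incident to $0$, $2$, or $3$ of the chosen components. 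The empty set and all of $P$ are always such subsets, and $P$ is poor exactly when these are the only ones.

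Next I would dualize this local condition. Under the correspondence, a subset $\Sigma$ of $2$-components of $P$ corresponds to a subset $S$ of edges of $\mathcal{T}$, an edge $\gamma$ of $P$ corresponds to a face $F$ of $\mathcal{T}$, and the three $2$-components incident to $\gamma$ correspond to the (at most three, counted with multiplicity via model edges) edges of $\mathcal{T}$ along the model edges of the face $F$. The count ``number of incident $2$-components of $\Sigma$ along $\gamma$'' becomes ``number of model edges of $F$ lying in $\bigcup_{e \in S}\mathcal{E}(e)$''. Thus the admissibility condition ``no edge $\gamma$ of $P$ meets exactly one component of $\Sigma$'' translates to ``no model face $F$ is incident to exactly one model edge in $\bigcup_{e \in S}\mathcal{E}(e)$''. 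Hence a proper nonempty simple subpolyhedron exists if and only if there is a proper nonempty subset $S$ of the edges for which \emph{every} model face is incident to $0$, $2$, or $3$ model edges in $\bigcup_{e\in S}\mathcal{E}(e)$; negating and accounting for the complementary subset (which yields the same family of subpolyhedra) gives exactly the stated criterion, that poorness is equivalent to \emph{every} proper subset $S$ admitting some face incident to exactly one edge in the union.

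The main obstacle I anticipate is the careful local analysis at the triple lines and true vertices of $P$, establishing rigorously that a simple subpolyhedron of $P$ is determined by, and in bijection with, a choice of $2$-components subject only to the ``never exactly one along an edge'' condition — in particular ruling out subpolyhedra that might retain an edge or vertex of $P$ without retaining enough adjacent $2$-cells, and confirming that the resulting subset is automatically a closed simple subpolyhedron whose own stratification is inherited from $P$. One must check that the closure of a valid union of open $2$-cells does not acquire inadmissible links at the true vertices (dual to tetrahedra), where four edges and several $2$-components meet; here the fact that $P$ comes from a triangulation (so the link structure at each true vertex is the standard one of Fig.~\ref{fig:neighborhood}(c)) should guarantee that the local condition along edges is sufficient. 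Once this local-to-global step is secured, the dualization is a routine bookkeeping translation via the incidence-preserving correspondence.
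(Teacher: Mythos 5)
Your proposal is correct and follows essentially the same route as the paper: simple subpolyhedra of the dual spine are determined by sets of $2$-components subject to the condition that no edge of $P$ is traversed exactly once by the boundary curves of the chosen components, and this dualizes to the stated condition on model faces; your extra check that the edge condition automatically yields an admissible link at each true vertex is a detail the paper leaves implicit. One small caveat: the parenthetical claim that the complementary subset ``yields the same family of subpolyhedra'' is false in general (see the polyhedron $P_2$ in the paper, where one $2$-component spans a simple subpolyhedron and the other does not), but it is not needed --- the equivalence follows directly without any complementation.
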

\begin{proof}
	Let $\mathcal{T}$ be a connected ideal triangulation and $P$ the dual special polyhedron corresponding to $\mathcal{T}$. Since any simple subpolyhedron of $P$ is compact, if it contains a point of a $2$-component, then it contains the whole of it. Thus, to describe a simple subpolyhedron of $P$ it is enough to indicate which $2$-components of $P$ it includes (its triple points and true vertices will then be determined uniquely). It is easy to see that the given set of $2$-components does not determine a simple subpolyhedron of $P$ if and only if there is an edge $\ell$ in $P$ such that the boundary curves of the selected $2$-components pass through $\ell$ exactly once. This is equivalent to the fact that the model face dual to $\ell$ is incident to exactly one model edge in $\bigcup_{e\in S} \mathcal{E}(e)$, where $S$ is the set of edges of $\mathcal{T}$ that are dual to the selected $2$-components.
\end{proof}

\section{Main theorem}
		
	\begin{theorem}
		\label{thm:main}
		Any poor ideal three-edge triangulation of a connected compact $3$-manifold with nonempty boundary is minimal.
	\end{theorem}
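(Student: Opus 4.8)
The plan is to deduce minimality from the criterion quoted in the introduction: an ideal triangulation of $M$ is minimal if and only if it has the minimum number of edges. So let $\mathcal T$ be a poor ideal three-edge triangulation of $M$; it suffices to show that $M$ has no ideal triangulation with one or two edges. Writing $t,f,e$ for the numbers of tetrahedra, faces and edges and passing to the dual special spine $P$, whose $2$-components, edges and true vertices correspond to the edges, faces and tetrahedra of the triangulation, one computes $\chi(M)=\chi(P)=e-f+t=e-t$ (as $f=2t$); hence $t=e-\chi(M)$, and minimizing edges is the same as minimizing tetrahedra. The engine of the proof is the $\varepsilon$-invariant. I would use its description as a state sum on the dual spine: $\varepsilon(M)$ is the homologically trivial part of the order-$5$ Turaev--Viro invariant, which amounts to summing over the Fibonacci (even-label) colorings of the $2$-components of $P$ by two colors $\mathbf 1$ and $\tau$, where $\tau$ has quantum dimension $\phi=(1+\sqrt5)/2$. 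The admissibility condition at an edge of $P$ --- that the number of incident $\tau$-sheets be different from one --- is exactly the requirement that the $\tau$-colored $2$-components form a simple subpolyhedron of $P$; this is the combinatorial observation already exploited in the proof of Lemma~\ref{lm:_poor_spine}.

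Consequently, for a poor triangulation the only admissible colorings are the two extreme ones: the trivial coloring and the coloring of every $2$-component by $\tau$. The first step is therefore to read off $\varepsilon(M)$ as a sum of exactly two terms. At each true vertex the local weight is the order-$5$ quantum $6j$-symbol, which for the all-$\tau$ coloring equals $s=-\phi^{-2}$ and for the trivial coloring equals $1$, while each $\tau$-colored $2$-component contributes its quantum dimension $\phi$. Collecting the global normalization one obtains
$$\varepsilon(M)=N_{\partial M}\Bigl(1+\phi^{\,e}\,s^{\,t}\Bigr)=N_{\partial M}\Bigl(1+(-1)^{\,e-\chi(M)}\,\phi^{\,2\chi(M)-e}\Bigr),$$
using $t=e-\chi(M)$ and $s=-\phi^{-2}$, where $N_{\partial M}$ depends only on $\partial M$. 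The crucial point is that, for fixed $\partial M$, this value depends only on the number of edges $e$, and the correction terms for $e=1,2,3$ have the pairwise distinct magnitudes $\phi^{\,2\chi(M)-1},\phi^{\,2\chi(M)-2},\phi^{\,2\chi(M)-3}$. Hence a manifold cannot carry poor triangulations with two different edge-numbers among $1,2,3$. Since every one-edge triangulation is automatically poor, this already rules out a one-edge triangulation of $M$, and it rules out a poor two-edge triangulation as well.

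What remains --- and what I expect to be the main obstacle --- is to exclude a two-edge triangulation $\mathcal T'$ that is \emph{not} poor. Such a $\mathcal T'$ has a proper nonempty simple subpolyhedron, i.e.\ an admissible coloring of its dual spine in which exactly one of the two $2$-components is colored by $\tau$, so that $\varepsilon(M)$ acquires one or two extra terms beyond the two extreme ones. I would handle this in one of two ways. The first is structural: a proper simple subpolyhedron in a two-edge spine should force a local reduction --- concretely, the applicability of a $3$-$2$ Pachner move, whose criterion is analyzed in \cite{VesTurFom16} --- producing a one-edge triangulation of $M$ and landing us in the already-excluded case. The second is computational: evaluate the extra $6j$-symbols occurring in the mixed coloring (these involve the symbols with some labels equal to $\mathbf 1$) and check that the resulting value of $\varepsilon(M)$ cannot coincide with $N_{\partial M}(1+\phi^{3}s^{\,3-\chi(M)})$, the value forced by the poor three-edge triangulation $\mathcal T$. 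Carrying out this last comparison cleanly, and verifying that the two descriptions of $\varepsilon(M)$ are genuinely incompatible, is the delicate part; once it is done, $M$ admits no triangulation with fewer than three edges, and $\mathcal T$ is minimal.
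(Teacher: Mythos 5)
Your overall strategy is the paper's: reduce minimality to minimizing the number of edges via $\chi(M)=\mathtt{e}-\mathtt{t}$, and compare the $\varepsilon$-invariant (the state sum over simple subpolyhedra of the dual spine, which for a poor spine has only the two extreme terms) computed from the given three-edge triangulation against a hypothetical smaller one. Your first step is sound and matches the paper: the one-edge case and the \emph{poor} two-edge case are killed by the two-term computation, since equality would force $\varepsilon^{2}=1$ or $\varepsilon^{5-2n}=0$.

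The genuine gap is exactly where you say it is: the non-poor two-edge triangulation. Neither of your two proposed resolutions works as stated. The structural one --- that a proper simple subpolyhedron of a two-edge spine forces an applicable $3$-$2$ Pachner move --- is unsupported: the existence of a proper simple subpolyhedron is a parity condition on how boundary curves of $2$-components traverse edges, and has nothing to do with a $2$-component being a triangle; \cite{VesTurFom16} gives a sufficient condition for minimality, not a dichotomy you can invoke here. The computational one is insufficient on its own, because the extra term contributed by a proper simple subpolyhedron $Q$ is $(-1)^{\mathtt{v}(Q)}\varepsilon^{\chi(Q)-\mathtt{v}(Q)}$ with $\chi(Q)$ and $\mathtt{v}(Q)$ a priori unknown, and the resulting equation \emph{is} satisfiable in the abstract: it merely pins down $\chi(Q)-\mathtt{v}(Q)=5-2n$ and the parity of $\mathtt{v}(Q)$. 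To get a contradiction the paper must do additional work that your proposal omits: (a) show by a connectivity argument that the two-edge spine $P_2$ has \emph{exactly one} proper simple subpolyhedron (two is impossible because some edge is traversed twice by one boundary curve and once by the other); (b) bound $\mathtt{v}(Q)\geqslant n-3$ from the cell decomposition, which together with the parity constraint forces $\mathtt{v}(Q)=n-2$; and (c) deduce that the excluded $2$-component $\beta$ has boundary curve passing through a single edge (a loop) exactly once, so that $Q=P_2\setminus\beta$ and $\chi(Q)-\mathtt{v}(Q)=4-2n$, contradicting the value $5-2n$ forced by the invariant. Without steps (a)--(c), the "delicate part" you flag remains open, so the proof is incomplete.
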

	
   The proof of Theorem \ref{thm:main} requires a certain amount of preparatory work. For an arbitrary ideal triangulation $\mathcal{T}$ we denote by $\mathtt{e}(\mathcal{T})$ and $\mathtt{t}(\mathcal{T})$ the number of edges and the number of tetrahedra in $\mathcal{T}$. Similarly, for an arbitrary special polyhedron $P$ we denote by $\mathtt{d}(P)$ and $\mathtt{v}(P)$ the number of $2$-components and the number of true vertices in $P$.
	
	\begin{lemma}
		\label{lm:euler}
		Let $\mathcal{T}$ be an ideal triangulation of a connected compact $3$-manifold $M$ with nonempty boundary. Then
		$$\chi(M) = \mathtt{e}(\mathcal{T}) - \mathtt{t}(\mathcal{T}).$$
	\end{lemma}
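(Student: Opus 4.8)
The plan is to compute $\chi(M)$ by transferring everything to the dual special spine $P$, since the duality of the previous subsection already identifies all the cell counts we need. The key opening observation is that $P$ is a spine of $M$, so by definition $M\setminus P\cong\partial M\times(0,1]$ is a half-open collar; pushing this collar inward yields a deformation retraction of $M$ onto $P$. Hence $M$ and $P$ are homotopy equivalent and $\chi(M)=\chi(P)$. This reduces the lemma to a purely combinatorial count on $P$.

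Next I would compute $\chi(P)$ from the natural cell structure furnished by the stratification of the special polyhedron: its $0$-cells are the true vertices, its $1$-cells are the open triple edges, and (by the speciality condition (2)) its $2$-cells are the $2$-components. Letting $E$ denote the number of triple edges, the alternating cell count for the finite CW complex $P$ gives $\chi(P)=\mathtt{v}(P)-E+\mathtt{d}(P)$. The duality $\mathcal{T}\to P$ then rewrites two of these three quantities directly, since it takes the edges of $\mathcal{T}$ to the $2$-components of $P$ and the tetrahedra of $\mathcal{T}$ to the true vertices of $P$; that is, $\mathtt{d}(P)=\mathtt{e}(\mathcal{T})$ and $\mathtt{v}(P)=\mathtt{t}(\mathcal{T})$, while the triple edges of $P$ correspond bijectively to the faces of $\mathcal{T}$.

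It then remains only to count the faces of $\mathcal{T}$. Each model tetrahedron carries four triangular model faces, and by the definition of a triangulation these are glued in pairs, so the number of faces equals $4\mathtt{t}(\mathcal{T})/2=2\mathtt{t}(\mathcal{T})$, whence $E=2\mathtt{t}(\mathcal{T})$. (Dually this is just the statement that the singular graph $SP$ is $4$-regular, each true vertex being incident to four triple edges.) Substituting the three identities yields
\[
\chi(M)=\chi(P)=\mathtt{t}(\mathcal{T})-2\mathtt{t}(\mathcal{T})+\mathtt{e}(\mathcal{T})=\mathtt{e}(\mathcal{T})-\mathtt{t}(\mathcal{T}),
\]
as required.

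The individual steps are routine, so the only point that demands genuine care — and which I regard as the main obstacle — is the very first one: justifying rigorously that the spine $P$ is a deformation retract of $M$, so that $\chi(M)=\chi(P)$, and that the stratification is a bona fide finite CW decomposition on which the alternating count computes $\chi(P)$. An alternative route that sidesteps the spine is to compute $\chi(|\mathcal{T}|)$ directly, contributing $+1$ for each vertex, $-1$ for each edge, $+1$ for each of the $2\mathtt{t}(\mathcal{T})$ faces and $-1$ for each tetrahedron, and then to pass from $|\mathcal{T}|$ to $M$ by noting that $|\mathcal{T}|$ is obtained from $M$ by coning each boundary component and invoking $\chi(\partial M)=2\chi(M)$ for the odd-dimensional manifold $M$. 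This variant merely trades the homotopy-equivalence subtlety for the doubling fact, so I would prefer the spine argument.
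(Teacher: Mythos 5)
Your proof is correct and follows essentially the same route as the paper: pass to the dual special spine $P$, use the homotopy equivalence $M\simeq P$ to get $\chi(M)=\chi(P)$, count cells of $P$ using the fact that the number of edges of $P$ is twice the number of true vertices (equivalently, $2\mathtt{t}(\mathcal{T})$ faces of $\mathcal{T}$), and translate via duality. The paper's proof is exactly this computation, stated slightly more tersely.
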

	\begin{proof}
		Let $P$ be a special spine of $M$ dual to $\mathcal{T}$.  Homotopy equivalence between $M$ and its spine $P$ implies the equality $\chi(M) = \chi (P)$. By definition, the number of edges of $P$ is equal to twice the number of its true vertices. Hence, $\chi(P) = \mathtt{d}(P) - \mathtt{v}(P)$. Finally, $\mathtt{d}(P) = \mathtt{e}(\mathcal{T})$ and $\mathtt{v}(P) = \mathtt{t}(\mathcal{T})$ due to the duality between $P$ and $\mathcal{T}$.
	\end{proof}	
	
	\begin{proof}[Proof of Theorem \ref{thm:main}]
		Let $\mathcal{T}_3$ be a poor ideal three-edge triangulation of a connected compact $3$-manifold $M$ with nonempty boundary. Suppose that $\mathcal{T}_3$ consists of exactly $n$ tetrahedra. Denote by $P_3$ the special spine of $M$ dual to $\mathcal{T}_3$.		
		
		On the contrary, suppose that $\mathcal{T}_3$ is not minimal. Then by Lemma \ref{lm:euler} two cases are possible: a minimal ideal triangulation of $M$ either consists of exactly $n-2$ tetrahedra and has exactly one edge, or consists of exactly $n-1$ tetrahedra and has exactly two edges.
		
		Consider the first case and denote by $\mathcal{T}_1$ a minimal ideal triangulation of $M$. Let $P_1$ be a special spine of $M$ dual to $\mathcal{T}_1$. Then $P_1$ has exactly $n-2$ true vertices and one $2$-component.
		
		Now we use the $\varepsilon$-invariant of $3$-manifolds, which is the homologically trivial part of order $5$ Turaev -- Viro invariant \cite{Turaev-Viro}. We give the definition of the $\varepsilon$-invariant following~\cite{MatBook}. Let $R$ be a special spine of a compact $3$-manifold $N$ with nonempty boundary. Set $\varepsilon = (1+\sqrt{5})/2$, a solution to the equation $\varepsilon^2=\varepsilon+1$. With each $Q\in \mathcal{F}(R)$ we associate its $\varepsilon$-weight by the formula
		$$
		w_{\varepsilon}(Q) = (-1)^{\mathtt{v}(Q)}\varepsilon^{\chi(Q)-\mathtt{v}(Q)},
		$$
		where $\mathtt{v}(Q)$ is the number of true vertices of the simple polyhedron $Q$ and $\chi(Q)$ is its Euler characteristic.  Set 
		$$
		t(R) = \sum_{Q\in \mathcal{F}(R)} w_{\varepsilon}(Q).
		$$
		As was shown in~\cite{MatBook}, if $R_1$ and $R_2$ are two special spines of the same $3$-manifold $N$, then $t(R_1)=t(R_2)$. Therefore, the $\varepsilon$-invariant $t(N)$ of $N$ is given by the formula $t(N)=t(R)$.
				
		We return to the proof of the theorem and calculate the value of $t(M)$, using the polyhedra $P_3$ and $P_1$. By lemma \ref{lm:_poor_spine}, $P_3$ is poor, which implies $\mathcal{F}(P_3) = \{\emptyset, P_3\}$. Since $P_1$ has exactly one $2$-component, we see that $\mathcal{F}(P_1) = \{\emptyset, P_1\}$. Thus
		$$
		t(M) = (-1)^{n} \varepsilon^{3-2n} + 1 = (-1)^{n-2} \varepsilon^{5-2n}  + 1, 
		$$ 
		which implies $\varepsilon^2 = 1$ and gives a contradiction.
		
		Now consider the more complicated second case. Denote by $\mathcal{T}_2$ a minimal triangulation of $M$. Let $P_2$ be a special spine of $M$ dual to $\mathcal{T}_2$. Then $P_2$ has exactly $n-1$ true vertices and two $2$-components. Let us describe all proper simple subpolyhedra of $P_2$.
		
		\begin{lemma} 
			\label{lm:proper_subpolyh}
			The polyhedron $P_2$ has exactly one proper simple subpolyhedron.
		\end{lemma}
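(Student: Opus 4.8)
The plan is to reduce the statement to a dichotomy about the two $2$-components of $P_2$ and then eliminate the two degenerate cases separately: the case of two proper subpolyhedra by the connectivity of the singular graph, and the case of none by the $\varepsilon$-invariant. First I would record, following the proof of Lemma~\ref{lm:_poor_spine}, that every simple subpolyhedron of $P_2$ is the union of the closures of some set of its $2$-components, is completely determined by that set, and arises from a given set if and only if no edge of $P_2$ carries exactly one sheet of the chosen $2$-components. Writing $\xi_1,\xi_2$ for the two $2$-components of $P_2$, the only candidates for a proper nonempty simple subpolyhedron are the closures $\overline{\xi_1}$ and $\overline{\xi_2}$. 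For an edge $\ell$ of $P_2$ let $n_i(\ell)$ be the number of sheets of $\xi_i$ incident to $\ell$, so that $n_1(\ell)+n_2(\ell)=3$. Then $\overline{\xi_1}$ is simple precisely when $n_1(\ell)\neq 1$ for all $\ell$, and $\overline{\xi_2}$ is simple precisely when $n_2(\ell)\neq 1$, i.e. $n_1(\ell)\neq 2$, for all $\ell$. Thus it suffices to prove that exactly one of these two conditions holds.

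Next I would rule out that both hold. If $\overline{\xi_1}$ and $\overline{\xi_2}$ were both simple, then every edge of $P_2$ would have all three of its sheets belonging to a single $2$-component. Looking at the link of a true vertex, which is the complete graph $K_4$ whose six edges are the local sheets and whose four vertices are the germs of the edges of $P_2$, this ``monochromatic'' condition at every edge forces all six local sheets at the vertex to receive the same color; since the singular graph $SP_2$ is connected, all $2$-components would then coincide, contradicting $\mathtt{d}(P_2)=2$. Hence $\overline{\xi_1}$ and $\overline{\xi_2}$ cannot both be simple.

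The main step is to rule out that neither holds, and here I would invoke the $\varepsilon$-invariant rather than argue combinatorially. If neither $\overline{\xi_1}$ nor $\overline{\xi_2}$ is simple, then $\mathcal{F}(P_2)=\{\emptyset,P_2\}$, that is, $P_2$ is poor. Since $\mathtt{v}(P_2)=n-1$ and $\chi(P_2)=\mathtt{d}(P_2)-\mathtt{v}(P_2)=3-n$, computing $t(M)$ from $P_2$ would give
\[
t(M)=1+w_{\varepsilon}(P_2)=1+(-1)^{n-1}\varepsilon^{4-2n}.
\]
Comparing with the value $t(M)=1+(-1)^{n}\varepsilon^{3-2n}$ already obtained from the poor spine $P_3$, we would get $(-1)^{n}\varepsilon^{3-2n}=(-1)^{n-1}\varepsilon^{4-2n}$, hence $\varepsilon=-1$, which is impossible since $\varepsilon=(1+\sqrt5)/2$. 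Therefore at least one of $\overline{\xi_1},\overline{\xi_2}$ is simple, and together with the previous paragraph exactly one is, which proves the lemma.

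I expect the delicate points to be the bookkeeping in the first paragraph, namely checking that the candidates are exactly $\overline{\xi_1}$ and $\overline{\xi_2}$ and translating simplicity into the conditions on $n_i(\ell)$, and the recognition in the last paragraph that the ``neither'' case is precisely the poorness of $P_2$, so that the $\varepsilon$-invariant computation against $P_3$ closes it cleanly. A purely combinatorial exclusion of this last case, ruling out the simultaneous presence of an edge with $n_1(\ell)=1$ and an edge with $n_1(\ell)=2$, seems considerably harder, and exploiting the already computed value of $t(M)$ is what makes the argument short.
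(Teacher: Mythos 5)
Your proof is correct and follows essentially the same two-step strategy as the paper: the ``no proper subpolyhedron'' case is excluded by comparing the $\varepsilon$-invariant computed from $P_2$ with the value already obtained from the poor spine $P_3$ (your contradiction $\varepsilon=-1$ and the paper's $\varepsilon^{5-2n}=0$ are equivalent), and the ``two proper subpolyhedra'' case is excluded by a connectivity argument producing an edge met by both $2$-components. Your extra bookkeeping (reducing the candidates to $\overline{\xi_1},\overline{\xi_2}$ via the sheet counts $n_i(\ell)$, and the $K_4$-link argument for connectedness of the singular graph) only makes explicit what the paper leaves implicit.
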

		\begin{proof} 
			By means of the $\varepsilon$-invariant we show that $P_2$ has at least one proper simple subpolyhedron. If this is not the case, then $\mathcal{F}(P_2) = \{\emptyset, P_2\}$,
			$$
			t(P_3) = (-1)^{n} \varepsilon^{3-2n} + 1, \quad \text{and} \quad  t(P_2) = (-1)^{n-1} \varepsilon^{4-2n}  + 1.
			$$ 
			Since $t(M)=t(P_3)=t(P_2)$, we have
			$$
			\varepsilon^{4-2n} + \varepsilon^{3-2n} = \varepsilon^{3-2n}  (\varepsilon + 1) = \varepsilon^{3-2n} \varepsilon^{2} = \varepsilon^{5-2n} = 0,
			$$ 
			but the last equality contradicts the definition of $\varepsilon$. Thus, $P_2$ has at least one proper simple subpolyhedron.
		
			We show that there is exactly one such subpolyhedron. Let us assume the opposite. Since $P_2$ has exactly two $2$-components, it has at most two proper simple subpolyhedra, and each of these subpolyhedra contains exactly one $2$-component. Since $P_2$ is connected, it has an edge traversed twice by the boundary curve of one of the $2$-components and traversed once by the boundary curve of the other $2$-component. Hence, the latter $2$-component cannot be contained in a proper simple subpolyhedron of $P_2$. This completes the proof.
		\end{proof}	
	
		\begin{lemma} 
			\label{lm:properties} 
			Let $Q$ be the proper simple subpolyhedron of $P_2$ whose existence follows from Lemma \ref{lm:proper_subpolyh}. Then the following holds:
			\begin{itemize}
				\item[(i)] $\mathtt{v}(Q)$ and $n$ have the same parity. 
				\item[(ii)] $\chi(Q) - \mathtt{v}(Q) = 5-2n$.
				\item[(iii)] $\mathtt{v}(Q) \geqslant n-3$.
			\end{itemize}
		\end{lemma}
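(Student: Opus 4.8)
The plan is to split the three items into two stages: items (i) and (ii) are forced immediately by the identity $t(P_3)=t(M)=t(P_2)$, whereas item (iii) requires understanding the combinatorics of how $Q$ sits inside $P_2$.

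First I would prove (i) and (ii) together. Since $\mathcal F(P_3)=\{\emptyset,P_3\}$ with $\mathtt v(P_3)=n$ and $\chi(P_3)=\chi(M)=3-n$ by Lemma~\ref{lm:euler}, and since $\mathcal F(P_2)=\{\emptyset,Q,P_2\}$ with $\mathtt v(P_2)=n-1$ and $\chi(P_2)=3-n$, the identity $t(P_3)=t(P_2)$ reads
\[
(-1)^n\varepsilon^{3-2n}=(-1)^{n-1}\varepsilon^{4-2n}+(-1)^{\mathtt v(Q)}\varepsilon^{\chi(Q)-\mathtt v(Q)}.
\]
Isolating the last summand and using $1+\varepsilon=\varepsilon^2$ collapses the right-hand side, giving
\[
(-1)^{\mathtt v(Q)}\varepsilon^{\chi(Q)-\mathtt v(Q)}=(-1)^n\varepsilon^{5-2n}.
\]
As $\varepsilon>1$ is a positive real and all exponents are integers, two such expressions can be equal only if their signs and their positive magnitudes agree separately. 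Equality of signs gives $\mathtt v(Q)\equiv n\pmod 2$, which is (i); injectivity of $x\mapsto\varepsilon^{x}$ gives $\chi(Q)-\mathtt v(Q)=5-2n$, which is (ii).

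For (iii) I would first identify $Q$ as the closure $\overline\xi$ of the single $2$-component $\xi$ of $P_2$ that it contains, using that a simple subpolyhedron is the closure of the union of its $2$-components. Each edge $\ell$ of $P_2$ is a triple line carrying three sheets, split between $\xi$ and the other $2$-component $\eta$; let $a(\ell)$ denote the number of sheets belonging to $\xi$, equivalently the number of times $\partial\xi$ traverses $\ell$. Simplicity of $Q=\overline\xi$ forbids $a(\ell)=1$ by (the argument of) Lemma~\ref{lm:_poor_spine}, so $a(\ell)\in\{0,2,3\}$; writing $E_0,E_2,E_3$ for the numbers of such edges and recalling that $P_2$ has $2(n-1)$ edges, I get $E_0+E_2+E_3=2(n-1)$. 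Restricting the CW structure of $P_2$ to the subcomplex $\overline\xi$ — one $2$-cell $\xi$, the $E_2+E_3$ edges with $a(\ell)\ge 1$, and the $V_\xi$ true vertices of $P_2$ lying in $\overline\xi$ — yields $\chi(Q)=V_\xi-(E_2+E_3)+1$.

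Combining this with (ii) and substituting $E_2+E_3=2(n-1)-E_0$ together with $V_\xi=(n-1)-V_\eta$, where $V_\eta$ counts the true vertices all of whose corners lie in $\eta$, I expect everything to simplify to $\mathtt v(Q)=(n-3)+(E_0-V_\eta)$, so that (iii) becomes the combinatorial inequality $E_0\ge V_\eta$. To establish it I would use the link $K_4$ of a true vertex $v$: the three sheets of each edge at $v$ are precisely the three corners incident to the corresponding vertex of $K_4$, so if every corner at $v$ lies in $\eta$ then all four edges at $v$ lie in $E_0$. Double counting the endpoints of such edges then gives $4V_\eta\le 2E_0$, hence $E_0\ge 2V_\eta\ge V_\eta$. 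The main obstacle is exactly this last step: the local bookkeeping at true vertices — matching the sheets of an edge with the corners at its endpoints, confirming that $Q=\overline\xi$ so the Euler-characteristic count is legitimate, and verifying that a vertex meeting only $\eta$ forces all its edges into $E_0$ — is where the real work lies, the rest being formal manipulation of the $\varepsilon$-weights.
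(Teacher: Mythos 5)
Your treatment of (i) and (ii) coincides with the paper's: both cancel the empty-set contributions in $t(P_3)=t(P_2)$, use $1+\varepsilon=\varepsilon^2$ to collapse the right-hand side to $(-1)^{\mathtt{v}(Q)}\varepsilon^{\chi(Q)-\mathtt{v}(Q)}=(-1)^{n}\varepsilon^{5-2n}$, and then separate sign from magnitude. For (iii), however, you take a genuinely different and noticeably heavier route. The paper stays with a crude subcomplex count: writing $k_0$ and $k_1$ for the numbers of $0$- and $1$-cells of $Q$ in the cell structure induced from $P_2$, it uses only $k_1\leqslant 2k_0$ and $k_0\leqslant n-1$ to get $\chi(Q)=k_0-k_1+1\geqslant 1-k_0\geqslant 2-n$, and then feeds this lower bound into (ii) to obtain $\mathtt{v}(Q)\geqslant n-3$ in one line. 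You instead identify $Q=\overline{\xi}$, stratify the edges of $P_2$ by the multiplicity $a(\ell)\in\{0,2,3\}$ of $\partial\xi$, derive the exact identity $\mathtt{v}(Q)=(n-3)+(E_0-V_\eta)$ from (ii) and the induced CW structure, and close with the double count $4V_\eta\leqslant 2E_0$. This is correct: the local facts you flag as the real work all hold ($a(\ell)=1$ would make the link of a point of $\ell$ in $\overline{\xi}$ an arc, contradicting simplicity; the three edges of $K_4$ at a vertex of $K_4$ are exactly the three sheets of the corresponding edge of $P_2$, so a true vertex all of whose corners lie in $\eta$ forces its four edges into $E_0$; and $\overline{\xi}$ is a subcomplex of $P_2$, so the alternating cell count computes $\chi(Q)$). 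Both proofs also correctly avoid conflating $\mathtt{v}(Q)$ with the number of true vertices of $P_2$ lying in $Q$, obtaining $\mathtt{v}(Q)$ from (ii) rather than by direct inspection. What your version buys is sharper information — the exact formula for $\mathtt{v}(Q)$ and the stronger inequality $E_0\geqslant 2V_\eta$ — at the cost of bookkeeping that the paper's two-line estimate $k_1\leqslant 2k_0$ renders unnecessary for the stated claim.
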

		\begin{proof} 
			Since $Q$ is a proper simple subpolyhedron of $P_2$, we have $\mathtt{v}(Q) < n-1$. Let us calculate the value of $t(M)$, using the polyhedron~$P_2$. Then
			$$
			t(P_2) = (-1)^{n-1} \varepsilon^{4- 2n} + (-1)^{\mathtt{v}(Q)} \varepsilon^{\chi(Q) - \mathtt{v}(Q)} + 1.  
			$$
			Since $t(P_3) = t(P_2)$, it follows that
			$$
			(-1)^{\mathtt{v}(Q)} \varepsilon^{\chi(Q) - \mathtt{v}(Q)} = (-1)^{n} \varepsilon^{5-2n}. 
			$$
			This implies that the first and the second statements of Lemma \ref{lm:properties} hold.
			
			Recall that $P_2$ has a natural cell decomposition: $2$-cells are $2$-components, $1$-cells are edges, and $0$-cells are true vertices. Denote the $2$-components of $P_2$ by $\alpha$ and $\beta$, assuming for definiteness that $\alpha$ is contained in $Q$, and $\beta$ is not. The cell decomposition of $P_2$ induces a cell decomposition of its simple subpolyhedron $Q$. Denote the number of $0$-cells of $Q$ by $k_{0}$ and the number of $1$-cells of $Q$ by $k_{1}$. Hence $k_{0} \leqslant n-1$ and $k_{1}\leqslant 2 k_{0}$. Since $Q$ has exactly one $2$-cell (that is the $2$-component $\alpha$), we have
			$$\chi(Q) = k_{0} - k_{1} + 1 \geqslant 1 - k_{0 }\geqslant 2-n.$$
			Applying this inequality to (ii), we get (iii). The proof is complete.
		\end{proof}			
		
		We return to the proof of the theorem and use the notations for $2$-components of $P_2$ as in the proof of Lemma \ref{lm:properties}. It follows from the statements (i) and (iii) of Lemma \ref{lm:properties} that $\mathtt{v}(Q) = n-2$. Then the boundary curve $\partial\beta$ passes only through one true vertex of $P_2$ and hence through only one edge of $P_2$. Moreover, this edge is a loop and $\partial\beta$ passes through it exactly once (see Fig.~\ref{fig:onewing}).
		
		\begin{figure}[ht]
			\begin{center}
				\includegraphics[scale=0.4]{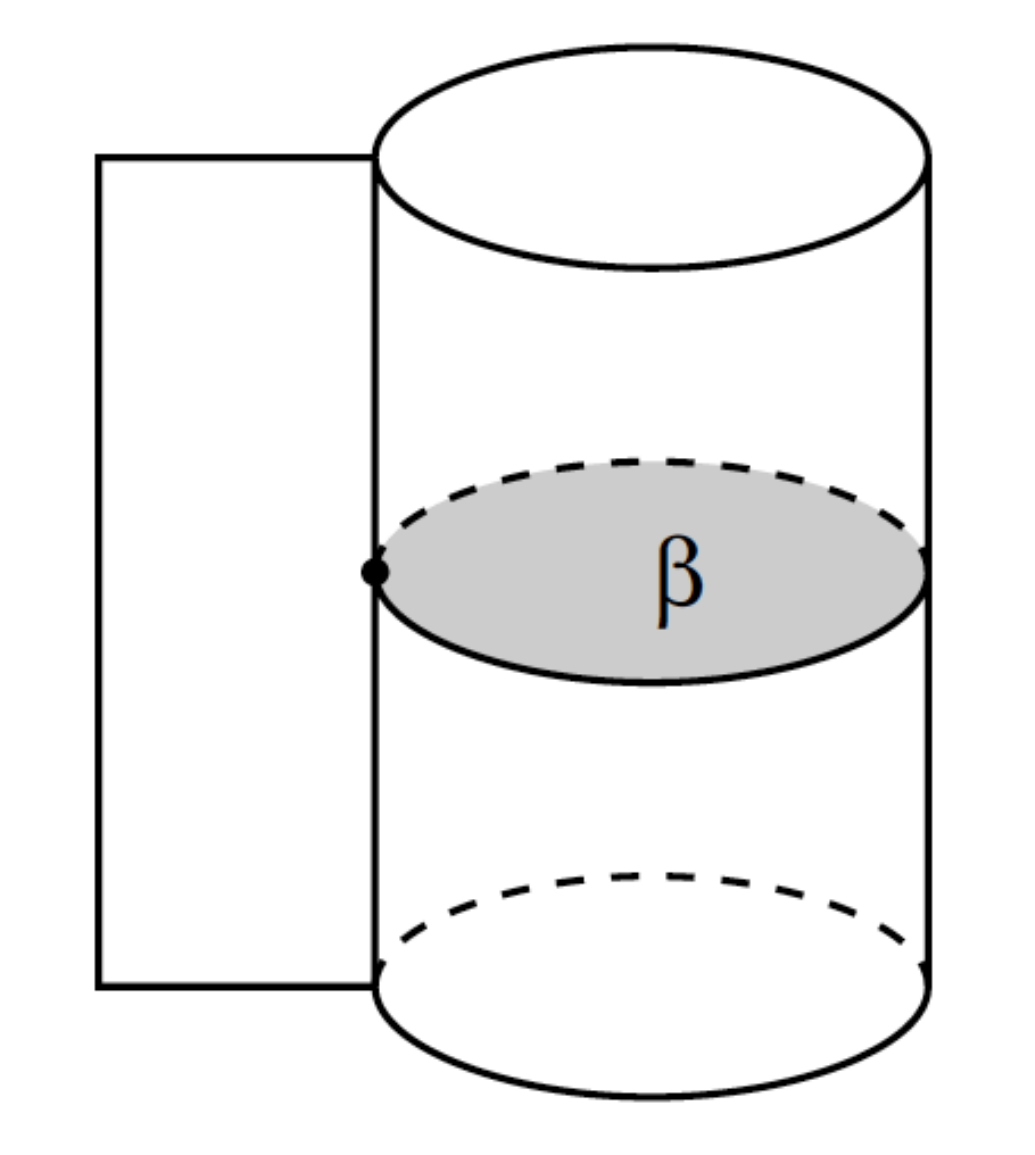}
			\end{center}
			\caption{The boundary curve $\partial\beta$ passes through one true vertex and one edge of the special polyhedron $P_2$.} 
			\label{fig:onewing}
		\end{figure}
	
		Hence $Q = P_2\setminus \beta$. So $\chi(Q) = \chi(P_2) - 1= 2-n$. Therefore, $\chi(Q) - \mathtt{v} (Q) = 4-2n$, which contradicts (ii). The proof is complete.
	\end{proof}
	
\section{Minimal triangulations of hyperbolic $3$-manifolds with totally geodesic boundary}
	
	We describe a family of ideal triangulations of $3$-manifolds with nonempty boundary satisfying the assumption of Theorem \ref{thm:main}. We say that a connected $4$-regular graph embedded in the plane is \emph{decorated} if at each vertex of the graph over- and underpasses are specified (just as for a crossing in a knot diagram), and each edge is assigned an element of the cyclic group $\mathbb Z_{3} = \{ 0,1,2 \}$. Decorated graphs are a partial case of the so-called o-graphs. It is known \cite[Theorem 0.1]{B-P} that every o-graph determines a special spine of a connected compact orientable $3$-manifold with nonempty boundary uniquely.

	Let $k, m, l$ be positive integers. Consider a planar connected $4$-regular graph $\Gamma (k, l, m)$ with exactly $2(k+l+m+3)$ vertices and with $2(k+l+m+3)$ double edges. Decompose the set of all its double edges into pairs such that the double edges in each pair are adjacent to the same vertex. We then decorate each such pair by one of the two possible ways as shown in Fig. \ref{fig:subgraphs} such that the final decoration corresponds to the cyclic string $AB^{k}AB^{l}AB^{m}$. For example, $\Gamma(2, 1, 1)$ is shown in Fig. \ref{fig:graph211}.
	
	\begin{figure}[ht]
		\begin{center}
			\includegraphics[scale=0.8]{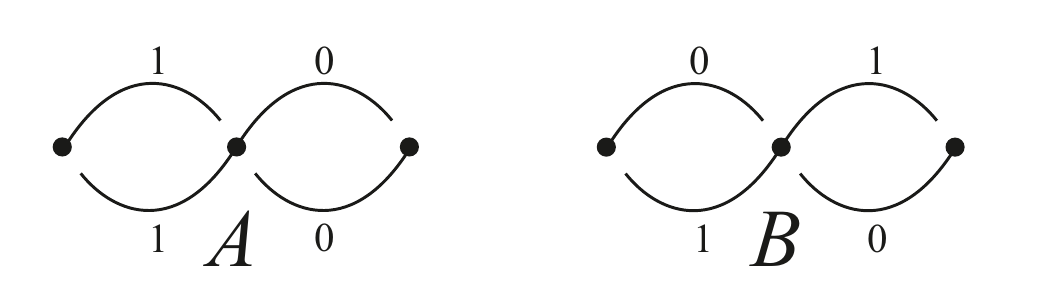}
		\end{center}
		\caption{Decorated subgraphs $A$ и $B$.} \label{fig:subgraphs}
	\end{figure}
	
	\begin{figure}[ht]
		\begin{center}
			\includegraphics[scale=0.8]{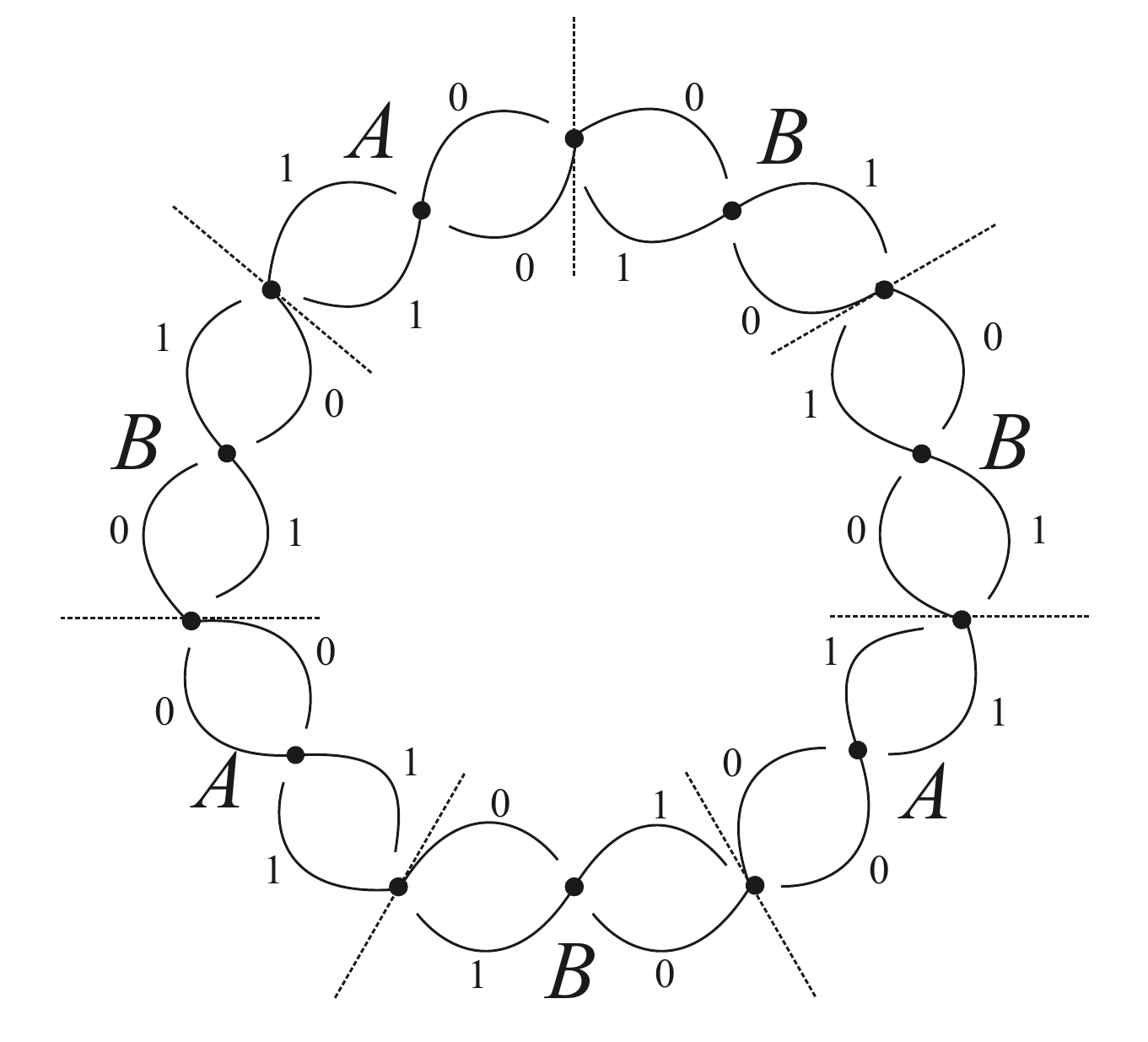}
		\end{center}
		\caption{Decorated graph $\Gamma(2, 1, 1)$.} 
		\label{fig:graph211}
	\end{figure}
	
	 According to \cite{B-P}, to obtain the special spine $P(k, l, m)$ determined by $\Gamma (k, l, m)$ we have to replace the subgraphs $A$ and $B$ in $\Gamma(k, l, m)$ with the similarly named blocks shown in Fig. \ref{fig:blocks_AB}. For example, $P (2, 1, 1)$ is shown in Fig. \ref{fig:spine211}. Let $W(k, l, m)$ be a connected compact orientable $3$-manifold with nonempty boundary determined by $P(k, l, m)$. Denote by $\mathcal{T}(k, l, m)$ the ideal triangulation of $W(k, l, m)$ dual to $P(k, l, m)$.
	 
	\begin{theorem} 
		\label{thm:example}
		For any positive integers $k, l, m$ the ideal triangulation $\mathcal{T}(k, l, m)$ of $W (k, l, m)$ is poor and three-edge.
	\end{theorem}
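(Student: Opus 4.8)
The plan is to work entirely on the dual side, with the special spine $P=P(k,l,m)$. Under the duality recalled above, the edges of $\mathcal{T}(k,l,m)$ correspond to the $2$-components of $P$, its faces to the edges of $P$, and its tetrahedra to the true vertices of $P$, with all incidences preserved. Thus \emph{three-edge} means exactly that $P$ has three $2$-components, and by Lemma~\ref{lm:_poor_spine} \emph{poor} means that for every proper nonempty subset of these $2$-components some edge of $P$ is traversed exactly once by the union of their boundary curves. Since each letter of the cyclic word $AB^{k}AB^{l}AB^{m}$ contributes two true vertices, $P$ has $2(k+l+m+3)$ of them, so $\mathtt{t}(\mathcal{T}(k,l,m))=2(k+l+m+3)$; this is not needed for the statement but fixes the scale of the computation and, via Lemma~\ref{lm:euler}, the value $\chi(W(k,l,m))=3-2(k+l+m+3)$ one should end up with.

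First I would count the $2$-components. Each one is a union of the elementary $2$-cells contributed by the blocks $A$ and $B$ of Fig.~\ref{fig:blocks_AB}, glued along the edges of $\Gamma(k,l,m)$. I would read off from the figure the finitely many elementary $2$-cells inside one block and the rule by which the boundary curve of each leaves the block along a prescribed strand; the decorations packaged into the blocks (the over/under data and the $\mathbb{Z}_{3}$-labels) dictate how the strands are matched when two blocks are glued. Tracing the resulting strands around the cyclic arrangement then amounts to following a permutation on a small fixed set of strand-ends, and the goal is to show that the orbits close up into exactly three global $2$-components, uniformly in $k,l,m$. Because the $B$-blocks repeat verbatim inside each syllable $B^{k},B^{l},B^{m}$, the passage through a whole run of $B$'s is governed by a single transition that is merely iterated, so the computation reduces to a finite check inside one $A$-block and one $B$-block together with the bookkeeping of how the strands re-enter an $A$-block after traversing a run.

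Having named the three $2$-components $\alpha,\beta,\gamma$, I would verify poorness by exhibiting witnessing edges. An edge through which $\alpha,\beta,\gamma$ pass with multiplicities $(a,b,c)$, $a+b+c=3$, is traversed exactly once by a subset $S$ precisely when the multiplicities of the members of $S$ sum to $1$. A single triple edge of type $(1,1,1)$ therefore simultaneously witnesses the three singletons $\{\alpha\},\{\beta\},\{\gamma\}$, while for the three pairs I must locate edges of the complementary types: $(1,0,2)$ or $(0,1,2)$ for $\{\alpha,\beta\}$, $(1,2,0)$ or $(0,2,1)$ for $\{\alpha,\gamma\}$, and $(2,1,0)$ or $(2,0,1)$ for $\{\beta,\gamma\}$. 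I would produce these edges explicitly from the block structure, e.g.\ a convenient triple edge at a chosen $A$-block for the singletons and the edges where two strands of one component run alongside a strand of another for the pairs, and check that all the required types occur for every $k,l,m$.

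The main obstacle is the combinatorial bookkeeping in the first of these steps: correctly tracking the strands through the blocks and proving that, independently of the parameters, they coalesce into exactly three $2$-components rather than a number that is two, four, or a function of $k,l,m$. Once the global $2$-components and the per-edge multiplicity types are pinned down, the poorness check is the short finite verification just described, and three-edgeness is simply the statement that the orbit count equals three. I expect the cleanest route to be to encode the strand transitions of $A$ and $B$ as maps on a fixed finite index set and to compute the orbits of the cyclic composition corresponding to $AB^{k}AB^{l}AB^{m}$ directly, which also makes the uniformity in $k,l,m$ transparent.
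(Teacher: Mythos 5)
Your proposal follows the same route as the paper's own proof: both work on the dual special spine $P(k,l,m)$, count its $2$-components (exactly three, uniformly in $k,l,m$), verify poorness by a direct combinatorial check on the boundary curves, and translate back to $\mathcal{T}(k,l,m)$ via Lemma~\ref{lm:_poor_spine}. The paper leaves these verifications implicit (``it is easy to verify'', ``combinatorial analysis shows''), so your explicit scheme of strand-transition maps through the $A$- and $B$-blocks and of per-edge multiplicity types $(a,b,c)$ with $a+b+c=3$ is a faithful, more detailed rendering of the same argument.
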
 

	\begin{proof}
		It is easy to verify that for any positive integers $k$, $l$, and $m$ the special spine $P (k, l, m)$ contains exactly three $2$-components. Combinatorial analysis shows that $P (k, l, m)$ is poor. Then, by lemma \ref{lm:_poor_spine}, $\mathcal{T} (k, l, m)$ is poor and contains exactly three edges.
	\end{proof}

	 \begin{figure}[ht]
		\begin{center}
			\includegraphics[scale=0.7]{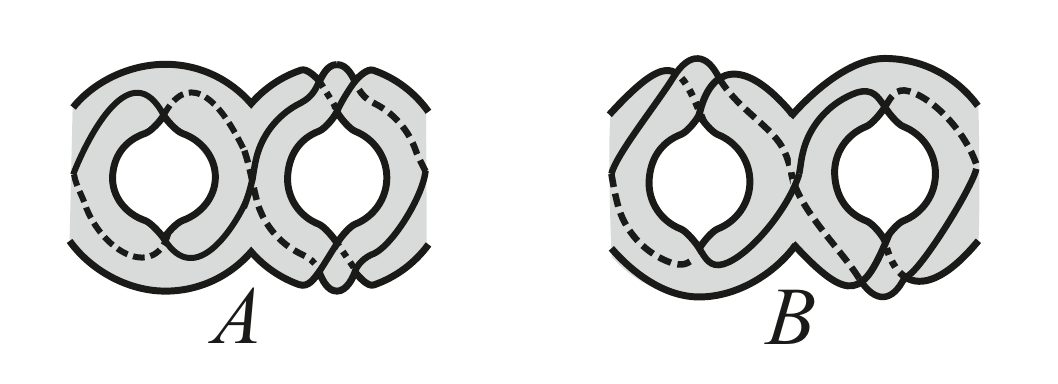}
		\end{center}
		\caption{Blocks $A$ и $B$.} 
		\label{fig:blocks_AB}
	\end{figure}
	
	\begin{figure}[ht]
		\begin{center}
			\includegraphics[scale=0.7]{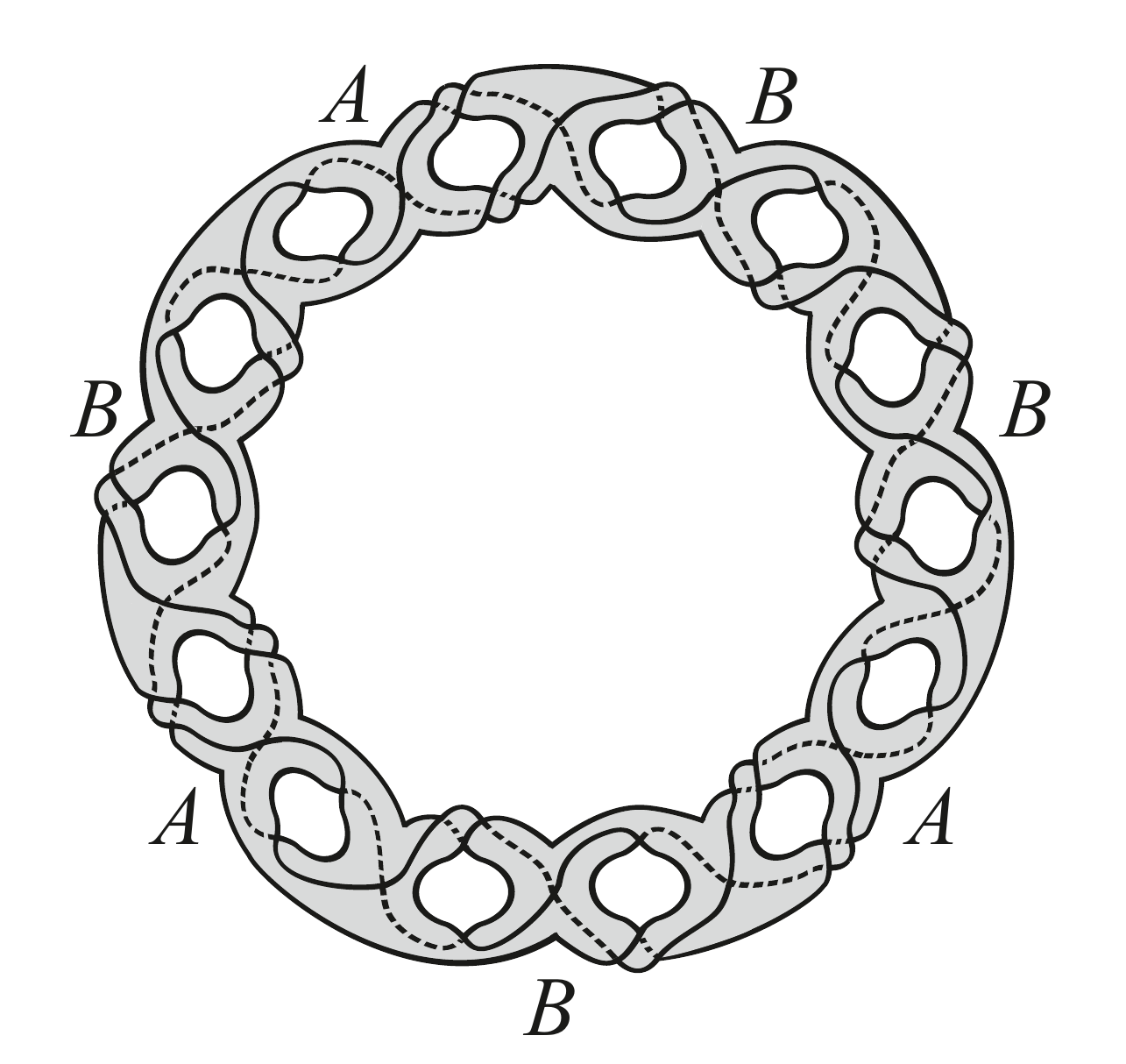}
		\end{center}
		\caption{Special spine $P(2, 1, 1)$.} 
		\label{fig:spine211}
	\end{figure}

\begin{theorem} 
	\label{thm:hyperbolic}
	For every positive integer $k$ the $3$-manifold $W(k, k, k)$ is hyperbolic with totally geodesic boundary.
\end{theorem}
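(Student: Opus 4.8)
The plan is to construct an explicit complete hyperbolic structure on the interior of $W(k,k,k)$ in which $\partial W(k,k,k)$ is totally geodesic, by realizing the ideal triangulation $\mathcal{T}(k,k,k)$ geometrically. I would use \emph{partially truncated} hyperbolic tetrahedra with right-angled truncations: each model tetrahedron is realized as a hyperideal hyperbolic tetrahedron whose four vertices are truncated by their polar planes, so that the four original triangular faces become right-angled hyperbolic hexagons and the four truncation triangles meet these hexagons orthogonally. If such realizations can be chosen so that (a) glued faces are isometric and (b) the dihedral angles around every edge of $\mathcal{T}(k,k,k)$ sum to $2\pi$, then the standard theory of truncated-tetrahedron gluings \cite{FrigMarPet03-1} yields a hyperbolic structure with totally geodesic boundary, the truncation triangles assembling into $\partial W(k,k,k)$.

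The key simplification comes from the symmetry forced by $k=l=m$. I would look for a \emph{regular} solution, realizing every tetrahedron of $\mathcal{T}(k,k,k)$ as one and the same regular hyperideal truncated tetrahedron $\Delta_\theta$, all of whose six internal dihedral angles equal a single value $\theta$. For each $\theta \in (0,\pi/3)$ there is a unique such $\Delta_\theta$, the condition $3\theta<\pi$ being exactly what makes each vertex hyperideal, so that the polar truncation produces a genuine right-angled hexagon. Because all tetrahedra are congruent copies of $\Delta_\theta$, every internal edge has the same length and every hexagonal face is the same regular right-angled hexagon; hence the face-pairings of $\mathcal{T}(k,k,k)$ are automatically realizable by isometries (truncation corners map to truncation corners, so internal edges glue to internal edges of equal length), and condition (a) holds for free. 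Only the angle condition (b) remains.

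With all tetrahedra equal to $\Delta_\theta$, the total dihedral angle around an edge $e$ equals $n_e\,\theta$, where $n_e = \#\mathcal{E}(e)$ is the number of model edges identified to form $e$. By Theorem \ref{thm:example} the triangulation has exactly three edges, so (b) becomes $n_1\theta = n_2\theta = n_3\theta = 2\pi$, solvable by a single $\theta$ precisely when $n_1 = n_2 = n_3$. Since $\mathcal{T}(k,k,k)$ has $2(3k+3) = 6(k+1)$ tetrahedra, each with six edges, the total number of model edges is $36(k+1)$, whence $n_1+n_2+n_3 = 36(k+1)$. I would then verify, using the cyclic symmetry of the decorated graph $\Gamma(k,k,k)$ that rotates the three blocks $AB^{k}$ and permutes the three $2$-components of $P(k,k,k)$ cyclically, that $n_1 = n_2 = n_3 = 12(k+1)$. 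This yields $\theta = 2\pi/(12(k+1)) = \pi/(6(k+1))$, which lies in $(0,\pi/3)$ for every $k\ge 1$, so $\Delta_\theta$ exists and condition (b) holds. The resulting gluing is then a smooth hyperbolic structure (cone angle $2\pi$ along each edge) on the interior of $W(k,k,k)$ with totally geodesic boundary; by Mostow--Prasad rigidity it is unique.

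The main obstacle is the combinatorial heart of the third step: confirming that the boundary curves of the three $2$-components of $P(k,k,k)$ traverse the edges of the spine in the perfectly balanced way that gives $n_1=n_2=n_3=12(k+1)$, equivalently that the cyclic symmetry of $\Gamma(k,k,k)$ equalizes the three counts. This requires tracking the explicit adjacencies inside the blocks $A$ and $B$ of Fig.~\ref{fig:blocks_AB} and how consecutive $B$-blocks chain together, a finite but delicate bookkeeping to be carried out uniformly in $k$. An alternative, purely topological route would bypass the geometry and verify that $W(k,k,k)$ is irreducible, boundary-irreducible, atoroidal and acylindrical with $\chi(\partial W(k,k,k))<0$, and then invoke Thurston's hyperbolization for Haken manifolds with boundary; there the analogous obstacle is proving acylindricity (absence of essential annuli), which tends to be at least as hard as the direct angle count.
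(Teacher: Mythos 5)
Your proposal is correct and follows essentially the same route as the paper: realize every tetrahedron of $\mathcal{T}(k,k,k)$ as the regular truncated hyperbolic tetrahedron of dihedral angle $\pi/(6k+6)$, use its symmetries to realize the face pairings by isometries, and check that each of the three edges absorbs exactly $12(k+1)$ model edges so the angles sum to $2\pi$. The combinatorial step you flag as the main obstacle (that $n_1=n_2=n_3=12(k+1)$) is precisely the step the paper also disposes of by inspection of the construction of $P(k,k,k)$, so nothing essential is missing from your plan.
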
 

\begin{proof}
	Recall the notion of hyperbolic truncated tetrahedron \cite{FrigMarPet03-1}. Let $T$ be a tetrahedron and let $T^{*}$ be the combinatorial polyhedron obtained by removing from $T$ sufficiently small open stars of the vertices. We call lateral hexagon and truncation triangle the intersection of $T^{*}$ respectively with a face and with the link of a vertex of $T$. The edges of the truncation triangles are called  boundary edges, the other edges of $T^{*}$ are called internal edges. A hyperbolic truncated tetrahedron is a realization of $T^{*}$ as a compact polyhedron in $\mathbb H^{3}$, such that the truncation triangles are geodesic triangles, the lateral hexagons are geodesic hexagons, and the truncation triangles and lateral hexagons lie at right angles to each other. A truncated tetrahedron is regular if all the dihedral angles along its internal edges are equal to each other. For every $\theta$ with $0 < \theta < \pi/3$ there exists up to isometry exactly one regular truncated tetrahedron $T^{*}_{\theta}$ of dihedral angle $\theta$ \cite{Fujii}. The boundary edges of $T^{*}_{\theta}$ all have the same length, and the internal edges all have the same length.
	
	In the same way as in \cite{FrigMarPet03-1} in order to give $W(k, k, k)$ a hyperbolic structure, we identify each tetrahedron of $\mathcal{T}(k, k, k)$ with a copy of the regular truncated hyperbolic tetrahedron $T^{*}_{\pi/(6k+6)}$. Due to the symmetries of $T^{*}_{\pi/(6k+6)}$, every pairing between the model faces of the tetrahedra of $\mathcal{T}(k, k, k)$ can be realized by an isometry between the corresponding lateral hexagons of the $T^{*}_{\pi/(6k+6)}$’s.	
	By construction of $P(k, k, k)$, it is easy to see that the number of model edges that are identified to form an edge of $\mathcal{T} (k, k, k)$ is equal to $12(k+1)$ and does not depend on the choice of this edge. 
	Since the dihedral angles of $T^{*}_{\pi/(6k+6)}$ are equal to $\pi/(6k+6)$, we conclude that $W(k, k, k)$ is a hyperbolic $3$-manifold with totally geodesic boundary. 
\end{proof}

\end{document}